\title[Carmichael numbers and the sieve]
      {Carmichael numbers and the sieve}
\author[W.\ D.\ Banks]{William D.\ Banks}
\address{Department of Mathematics, 
         University of Missouri, 
         Columbia MO, USA.}
\email{bankswd@missouri.edu}
\author[T.\ Freiberg]{Tristan Freiberg}
\address{Department of Mathematics, 
         University of Missouri, 
         Columbia MO, USA.}
\email{freibergt@missouri.edu}
\date{\today}
\begin{document}


\begin{abstract}
Using the sieve, we show that there are infinitely many Carmichael 
numbers whose prime factors all have the form $p = 1 + a^2 + b^2$ 
with $a,b \in \ZZ$.
\end{abstract}

\maketitle

\begin{center}
{\itshape 
Dedicated to Carl Pomerance on the \\ 
occasion of his 70th birthday
}
\end{center}

\section{Introduction}
 \label{sec:intro}
 
For any prime number $n$, Fermat's little theorem asserts that
\begin{equation}
 \label{eq:FLT} 
 a^n \equiv a \pod{n}
      \qquad (a \in \ZZ).
\end{equation}
Around 1910, Carmichael initiated the study of composite numbers 
$n$ with the property \eqref{eq:FLT}; these are now known as 
{\em Carmichael numbers}.  
The existence of infinitely many Carmichael numbers was first 
established in the celebrated 1994 paper of Alford, Granville and 
Pomerance \cite{AGP}.

Since prime numbers and Carmichael numbers are linked by the 
common property \eqref{eq:FLT}, from a number-theoretic point of 
view it is natural to investigate various arithmetic properties of 
Carmichael numbers.  
For example, Banks and Pomerance \cite{BANPOM} gave a conditional 
proof of their conjecture that there are infinitely many 
Carmichael numbers in an arithmetic progression 
$a + bc$ ($c \in \ZZ$) 
whenever $(a,b) = 1$.  
The conjecture was proved unconditionally by Matom\"aki \cite{MAT} 
in the special case that $a$ is a quadratic residue modulo $b$, 
and using an extension of her methods Wright \cite{WRI} 
established the conjecture in full generality.
The techniques introduced in \cite{AGP} have led to many other 
investigations into the arithmetic properties of Carmichael 
numbers; see 
\cite{%
BANKSetal1,
BANKSetal2,
MCNEW,
BAKERetal,
WRI2,
BAN1,
BANYEA,
GRA,
BANKSetal3,
HAR1,
HAR2,
HALHUN,
HSU,
LOHNIE,
AGP2}
and the references therein.

In this paper, we combine sieve techniques with the method of 
\cite{AGP} to prove the following result.

\begin{theorem}
\label{thm:main}
There exist infinitely many Carmichael numbers whose prime factors 
all have the form $p = 1 + a^2 + b^2$ with some $a,b \in \ZZ$. 
Moreover, there is a positive constant $C$ such that the number of 
such Carmichael numbers not exceeding $x$ is at least $x^C$ 
\textup{(}%
once $x$ is sufficiently large in terms of $C$%
\textup{)}.
\end{theorem}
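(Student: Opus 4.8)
\medskip
\noindent\textbf{Proof strategy.}
The plan is to run the Alford--Granville--Pomerance construction of Carmichael numbers from \cite{AGP}, but to arrange that \emph{every} auxiliary prime entering the construction is either $2$ or $\equiv 1\pmod 4$; the shape of the prime factors then comes for free. Recall Korselt's criterion: a squarefree, odd, composite $n$ with at least three prime factors is a Carmichael number exactly when $p-1\mid n-1$ for every prime $p\mid n$. Recall also that the set of sums of two squares contains $2$ and every prime $\equiv 1\pmod 4$ and is closed under multiplication; hence any positive integer all of whose prime factors are $2$ or $\equiv 1\pmod 4$ is of the form $\alpha^2+\beta^2$ with $\alpha,\beta\in\ZZ$. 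Consequently, if we fix an integer $L$ whose prime factors are all $2$ or $\equiv 1\pmod 4$, then every prime $p$ with $p-1\mid L$ automatically satisfies $p=1+a^2+b^2$, and it suffices to produce, for all large $x$, at least $x^{C}$ Carmichael numbers $n\le x$ whose prime factors $p$ all satisfy $p-1\mid L$.

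Following \cite{AGP}, this splits into a sieve step and a combinatorial step. For the \emph{sieve step} we must choose $L$ as above with $\omega(L)\to\infty$ and $\log L$ at most a small power of $x$, and exhibit a set $\mathcal P$ of primes $p\le x^{1/k}$ (for a suitable fixed integer $k$) with $p-1\mid L$ and $\#\mathcal P$ comfortably larger than the threshold appearing in the subset-product lemma of \cite{AGP} for the group $(\ZZ/L\ZZ)^{\times}$. As in \cite{AGP} one first fixes, by a pigeonhole argument over a Bombieri--Vinogradov-type mean value, a convenient modulus $d$ --- here taken to be a power of $2$ times a product of primes $\equiv 1\pmod 4$ --- for which $p\equiv 1\pmod d$ holds for roughly the expected number of primes $p\le x^{1/k}$; one then also demands that $(p-1)/d$ be squarefree, $y$-smooth, and divisible only by primes $\equiv 1\pmod 4$, so that $p-1\mid L$ with $L=d\prod_{\ell\le y,\ \ell\equiv 1\,(\mathrm{mod}\ 4)}\ell$, and bounds the number of such $p$ from below via the sieve. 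Since the primes $\equiv 1\pmod 4$ have relative density $\tfrac12$, this is a sieve of dimension $\tfrac12$ rather than of dimension $0$ as in \cite{AGP}: relative to the count of smooth shifted primes used there we forfeit a fixed power of $\log x$, but this is immaterial for the size of $\#\mathcal P$ we require.

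For the \emph{combinatorial step}, once $\#\mathcal P$ exceeds the said threshold by a suitable margin, the subset-product (zero-sum) lemma of \cite{AGP} produces $2^{(1-o(1))\#\mathcal P}$ subsets $S\subseteq\mathcal P$ with $\prod_{p\in S}p\equiv 1\pmod L$; restricting to subsets of size exactly $k$ still leaves $\gg\binom{\#\mathcal P}{k}\big/\#(\ZZ/L\ZZ)^{\times}$ of them, and with the parameters above one checks that this is $\ge x^{C}$ for some fixed $C>0$. For each such $S$ the integer $n_S:=\prod_{p\in S}p$ is squarefree, odd, composite, at most $(x^{1/k})^{k}=x$, and satisfies $L\mid n_S-1$, hence $p-1\mid L\mid n_S-1$ for every $p\mid n_S$; by Korselt's criterion $n_S$ is a Carmichael number, all of its prime factors have the form $1+a^2+b^2$, and distinct $S$ give distinct $n_S$ by unique factorisation. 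Letting $x\to\infty$ yields infinitely many such Carmichael numbers, proving both assertions.

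The main obstacle is the sieve step: one must carry the prime-producing argument of \cite{AGP} through with the constraint $\equiv 1\pmod 4$ imposed on $d$ and on every odd prime dividing $L$, keep honest track of the $\tfrac12$-dimensional sieve loss, and then verify that all the quantitative demands --- $p\le x^{1/k}$, $y$-smoothness of $(p-1)/d$, $\log L$ a small power of $x$, $\omega(L)$ large, and $\#\mathcal P$ exceeding the subset-product threshold --- can be met simultaneously. The remaining ingredients are essentially those of \cite{AGP}.
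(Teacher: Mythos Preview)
Your proposal is correct and takes essentially the same approach as the paper: restrict every odd prime entering the AGP construction to be $\equiv 1 \pmod 4$ so that each $p-1$ is automatically a sum of two squares, supply the required prime-counting input via a half-dimensional (semi-linear) sieve in place of the dimension-$0$ input in \cite{AGP}, and then run the AGP subset-product argument unchanged. The paper's organization differs only in bookkeeping---it separates the smooth-shifted-prime input (handled by citing Matom\"aki's $\mathcal{E}\supseteq(0,1/2)$) from the main sieve estimate counting primes $2d\kappa+1$ with $\kappa$ free of prime factors $\equiv 3\pmod 4$, and it carries out the latter explicitly with the semi-linear sieve and a switching trick---but the overall strategy is the one you describe.
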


\begin{remark}
 \label{rem:data}
The Carmichael numbers described in this theorem seem to be quite unusual.
Up to $10^8$, there are only seven such Carmichael numbers, namely
\[
561,
162401,
410041,
488881,
656601,
2433601,
36765901.
\]
By contrast, there are 255 ``ordinary'' Carmichael numbers up to 
$10^8$.

As is well known, whenever $p = 6k + 1$, $q = 12k + 1$ and 
$r = 18k + 1$ are simultaneously prime for some positive integer 
$k$, the number $n = pqr$ is a Carmichael number.
However, no number of this form is a Carmichael number of the type 
described in the theorem, since
$p - 1 = 6k$ and 
$r - 1 = 3\cdot 6k$ cannot both be expressed as a sum of two squares.
\end{remark}

\subsection*{Notation}
 \label{sec:notate}

Aside from notation introduced in situ, let $\PP$ be the set 
of primes, and let $p$ and $q$ always denote primes.

Let   
$
 a \pod{b} \defeq \{a + bc : c \in \ZZ\},  
$
$\ind{S} : \NN \to \{0,1\}$ the indicator function of 
$S \subseteq \NN$, 
$
 \pi(x)
  \defeq 
   {\textstyle \sum_{n \le x} }
    \ind{\PP}(n)  
$
and
$
 \pi(x;b,a)
  \defeq 
  {\textstyle \sum_{n \le x} }
    \ind{\PP \cap \, a \pod{b}}(n).
$
Let 
$\phi,\mu,\omega,P^+ : \NN \to \NN$ be the Euler, M\"obius, number 
of distinct prime divisors and greatest prime divisor functions 
($\omega(1) \defeq 0$ and $\gp{1} \defeq 1$).
Let $\log_n : [1,\infty) \to [1,\infty)$ be the $n$th iterated logarithm, i.e.,
$
 \log_1 x \defeq \max\{1,\log x\}
$
and 
$
 \log_{n+1} x \defeq \log_1(\log_n x)
$.

Let expressions of the form
$f(x) = O(g(x))$,  
$f(x) \ll g(x)$ and 
$g(x) \gg f(x)$  
signify that $|f(x)| \le c|g(x)|$ for all sufficiently large $x$, 
where $c > 0$ is an absolute  constant.
The notation $f(x) \asymp g(x)$ indicates that $f(x) \ll g(x) \ll f(x)$.
We also let $f(x) = O_A(g(x))$ etc.\ 
have the same meanings with $c$ depending on a parameter $A$. %
Finally, let $o_{x \to \infty}(1)$ (or simply $o(1)$ if $x$ is clear in context) 
denote a quantity that tends to zero as
$x$ tends to infinity.

\section{AGP setup}
 \label{sec:AGP}

Let $\BB \defeq\{1, 5, 13, 17, 25, \ldots \}$ be the 
multiplicative semigroup of the natural numbers generated by the set of primes
$\PP \cap 1 \pod{4}$, and let  
\[
 \pi(x,y)
  \defeq 
   \# 
    \{p \in \BB \cap [2,x] : \gp{p-1} \le y\}.
\]
\begin{definition}
 \label{def:E}
Let $\mathcal{E}$ be the set of numbers $E$ in $(0,1)$ for which 
there exist $x_1(E),\gamma_1(E) > 0$ such that for all 
$x \ge x_1(E)$, the inequality
\begin{equation}
 \label{eq:piE}
  \pi(x,x^{1 - E})
   \ge 
    \gamma_1(E)\pi(x)
\end{equation}
holds.
\end{definition}
\begin{definition}
 \label{def:ell}
Given $T \ge 3$, let $\ell(T)$ be the integer given in terms of 
putative Siegel zeros%
\footnote{%
We take license with the term ``Siegel zero'' --- cf.\ 
Lemma \ref{lem:4.1} below for a precise statement.
}
in Lemma \ref{lem:4.1} below.
\end{definition}

\begin{definition}
 \label{def:B}
For any fixed positive constants $A,A'$, let
$\mathcal{B}=\mathcal{B}(A,A')$ denote
the set of numbers $B\in(0,1)$ for which the following holds.
There exists $x_2(B)$ such that for all $x \ge x_2(B)$
we have  
\begin{equation}
 \label{eq:0.3AGP}
  \frac{A^{-1}dx^{1 - B}y^{-1}}{\phi(d)\log(dx^{1-B})}
   \le 
    \sqrt{\log x}
     \sum_{\kappa \le x^{1 - B}y^{-1}}
      \ind{\BB}(\kappa)
       \ind{\PP}(2d\kappa + 1)  
        \le
         \frac{A'dx^{1 - B}y^{-1}}{\phi(d)\log(dx^{1-B})}
\end{equation}
whenever 
$d \in \BB \cap [1,x^By]$, 
$|\mu(d)| = 1$, 
$\gp{d},y \le x^{B/\log_2 x}$ and 
$(d,\ell(x^B)) = 1$.
\end{definition}

%
Matom\"aki \cite[Lemma 2]{MAT} has shown that 
$
  \mathcal{E} \supseteq (0,1/2).
$
By Lemma \ref{lem:sievelem} below, if $A,A'$ are 
sufficiently large%
\footnote{%
Although we do not give details, one can show that $A = 50$ and $A' = 1$ 
suffice.
We do not compute a value for $\beta$.
}
and $\beta$ is sufficiently small (depending on $A,A'$), then 
$
  \mathcal{B} \supseteq (0,\beta).
$
Consequently, the following analogue of \cite[Theorem 4.1]{AGP}\break immediately implies Theorem \ref{thm:main}.
Its proof relies on Lemma \ref{thm:3.1AGP} below, which is 
itself analogous to \cite[Theorem 3.1]{AGP}. 

\begin{theorem}
 \label{thm:4.1AGP}
Let $C(x)$ denote the number of Carmichael numbers up to $x$ all 
of whose prime divisors $p$ are such that $(p - 1)/2 \in \BB$.
For each $E \in \mathcal{E} \cap (4/9,1)$, $B \in \mathcal{B}$ and 
$\epsilon > 0$, there is a number $x_4(E,B,\epsilon)$, such that 
whenever $x \ge x_4(E,B,\epsilon)$, we have 
$C(x) \ge x^{EB - \epsilon}$.
\end{theorem}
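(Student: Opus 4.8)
The strategy follows the architecture of \cite[Theorem 4.1]{AGP}, adapted to the constraint that every prime factor $p$ of the Carmichael number satisfies $(p-1)/2 \in \BB$, i.e.\ $p - 1 = 2\kappa$ with $\kappa$ a product of primes $\equiv 1 \pmod 4$ (so that $p = 1 + a^2 + b^2$ by the two-squares theorem applied to $\kappa$, after noting $2\kappa = 1 + (a+b)^2 + (a-b)^2$ type identities). The plan is to first produce, via Lemma \ref{thm:3.1AGP}, a large set $\mathcal{P}$ of primes $p \le x$ with $(p-1)/2 \in \BB$ such that all the $p-1$ share a highly composite structure: there should exist a modulus $L$ with many divisors such that $p - 1 \mid L$ — more precisely, following AGP, one finds $L = \mathrm{lcm}$ of a controlled form and a residue class, so that for each squarefree divisor $d$ of $L$ the congruence forces $dn \equiv 1$ behavior. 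Concretely, one chooses parameters $\theta, y$ with $y = x^{1-E}$, uses Definition \ref{def:E} to get $\gg \pi(x)$ primes in $\BB \cap [2,x]$ that are $x^{1-E}$-smooth, and then uses Lemma \ref{thm:3.1AGP} (the analogue of \cite[Theorem 3.1]{AGP}) to extract from these a subset of size $\gg x^{EB-\epsilon/2}$ lying in a single progression modulo a smooth, squarefree, highly-divisible modulus $L$, with $(L/2, \ell(\cdot))=1$ and the other side-conditions of Definition \ref{def:B} in force.

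Next I would invoke the combinatorial heart of the AGP method: given a set $\mathcal{P}$ of primes all congruent to $1$ modulo some integer $L$ with $\#\mathcal{P}$ large relative to the number of divisors of $L$, one shows that the group $(\ZZ/L\ZZ)^\times$ is generated by $\mathcal{P}$ in a strong sense, and then by a pigeonhole/linear-algebra argument over $(\ZZ/L\ZZ)^\times$ (van Emde Boas–Kruyswijk / Alford–Granville–Pomerance), a positive proportion of subsets $S \subseteq \mathcal{P}$ have $\prod_{p \in S} p \equiv 1 \pmod L$. For each such $S$, the product $n = \prod_{p\in S} p$ is squarefree, and because each $p - 1 \mid L$ and $n \equiv 1 \pmod L$, Korselt's criterion gives that $n$ is a Carmichael number; moreover every prime factor $p$ of $n$ has $(p-1)/2 \in \BB$ by construction. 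Counting: taking $|\mathcal{P}| = k$, the number of such subsets is $\gg 2^k / L$, and arranging $n \le x$ forces $k \asymp \log x / \log(\text{typical prime})$; a careful bookkeeping of the exponents — exactly as in AGP — yields $C(x) \ge x^{EB - \epsilon}$ for $x$ large in terms of $E, B, \epsilon$.

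The main obstacle, and the place where this proof genuinely differs from \cite{AGP}, is ensuring that the primes supplied by Definition \ref{def:E} can be sieved and redistributed into a single residue class modulo a highly divisible $L$ \emph{while retaining the constraint $(p-1)/2 \in \BB$}; this is precisely what Lemma \ref{thm:3.1AGP} must deliver, and its proof in turn consumes the bilinear/level-of-distribution input encoded in Definition \ref{def:B} (the two-sided bound \eqref{eq:0.3AGP} for $\pi$-type sums over $\kappa \in \BB$ with $2d\kappa+1$ prime). The delicate points are: (i) the upper bound in \eqref{eq:0.3AGP} is needed to keep the number of ``bad'' $d$ under control in the Chinese-remainder amalgamation; (ii) the coprimality condition $(d, \ell(x^B)) = 1$ must be threaded through so that the Siegel-zero exceptional modulus does not destroy the lower bound; and (iii) one must verify that requiring $\kappa \in \BB$ (a positive-density but \emph{not} full-density sieve condition) still leaves $\gg \pi(x)$ smooth primes, which is exactly Matomäki's input $\mathcal{E} \supseteq (0,1/2)$ together with $E \in \mathcal{E}\cap(4/9,1)$. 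Granting Lemmas \ref{lem:4.1}, \ref{lem:sievelem}, \ref{thm:3.1AGP}, the remaining argument is then a direct transcription of the AGP endgame, and I would present it as such, pausing only to re-verify Korselt's criterion in the present setting.
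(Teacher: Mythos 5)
Your high-level architecture is right: build a smooth, squarefree modulus $L$ from primes in $\BB$ whose shifts $q-1$ are themselves smooth, feed it into Lemma~\ref{thm:3.1AGP} to extract a large set $\mathcal{P}$ of primes $p=2dk+1$ with $d\mid L$, then run the Korselt/group-theoretic endgame of \cite{AGP}, and the paper indeed defers that endgame to \cite{AGP} verbatim.  However, there are two genuine gaps in your setup, and they are precisely the places where this theorem departs from \cite{AGP}.

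First, your parametrization $y=x^{1-E}$ is incorrect and would break hypothesis (H2) of Lemma~\ref{thm:3.1AGP}.  In the paper's construction $y$ is of roughly logarithmic size: one sets $\theta=(1-E)^{-1}$, $\delta=\epsilon\theta/(4B)$, $x=\e^{y^{1+\delta}}$ (so $y=(\log x)^{1/(1+\delta)}$), and takes $\mathcal{Q}=\{q\in\BB\cap(y^\theta/\log y, y^\theta]:\gp{q-1}\le y\}$ with $L=\prod_{q\in\mathcal{Q},\,q\nmid\ell}q$.  The condition $E\in\mathcal{E}$ is used via $\pi(y^\theta,(y^\theta)^{1-E})\ge\gamma_1(E)\pi(y^\theta)$ at this small scale $y^\theta\asymp(\log x)^{\theta/(1+\delta)}$, not at scale $x$; with your $y=x^{1-E}$ the primes in $\mathcal{Q}$ would reach $y^\theta=x$, massively violating $\gp{L}\le x^{B/\log_2 x}$.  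Second, you never verify (H4)--(H5), yet that verification is the whole reason the theorem is stated for $E\in\mathcal{E}\cap(4/9,1)$ rather than all of $\mathcal{E}$.  Hypothesis (H4) carries the factor $\sqrt{\log x}$ (coming from the $(\log x)^{3/2}$ density loss of Lemma~\ref{lem:sievelem}, a feature absent in \cite{AGP}), and checking it reduces to showing $\theta/(1+\delta)>3/2$, equivalently $2\theta-3\delta>3$; feeding in $\delta<\theta E/4$ this becomes $2+5E/(4(1-E))>3$, i.e.\ $E>4/9$.  Omitting this computation leaves unexplained the one quantitatively new constraint in the statement, so the proposal as written does not actually establish the theorem over the stated range of $E$.
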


\begin{lemma}
 \label{thm:3.1AGP}
Fix any $B \in \mathcal{B}$.
There exists $x_3(B)$ such that the following holds for  all
$x \ge x_3(B)$ and any integer $L$ satisfying hypotheses 
\textup{(H1) --- (H5)} below.
There is some $k \in [1,x^{1 - B}] \cap \BB$ with 
$(k,L) = 1$ such that 
\begin{equation*}
 4A
  (\log x)^{3/2}
   \sum_{d \mid L, \, 2dk + 1 \le x}
    \ind{\PP}(2dk + 1)
     >
      \#\Br{d \mid L : d \le x^B}.
\end{equation*}
%
Our hypotheses are the following:
\begin{itemize}

\item[\textup{(H1)}]
 $L \in \BB$ and $|\mu(L)| = 1$;
\item[\textup{(H2)}]
 $\gp{L} \le x^{B/\log_2 x}$;
\item[\textup{(H3)}]
 $(L,\ell(x^B)) = 1$;
\item[\textup{(H4)}]
for any $d \mid L$ with $d \le x^B$, the bound
$
 \textstyle 
 16A\sqrt{\log x} \sum_{q \mid d} 1/q \le 1 - B
$ holds;
\item[\textup{(H5)}] we have
$
 \textstyle 
 24AA' \sum_{q \mid L} 1/q \le 5(1 - B).
$
\end{itemize}
\end{lemma}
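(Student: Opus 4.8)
The plan is to run the "finding-$k$ by averaging" argument from [AGP], adapted to the semigroup $\BB$. First I would set $D \defeq \#\{d \mid L : d \le x^B\}$ and consider the weighted sum $S \defeq \sum_{k} \ind{\BB}(k)\, W(k) \sum_{d \mid L,\ 2dk+1 \le x} \ind{\PP}(2dk+1)$, where $k$ ranges over $[1, x^{1-B}]$ and $W(k)$ is a suitable smooth or characteristic weight designed so that the number of admissible $k$ (those in $\BB$, coprime to $L$, in the right range) is comparable to $\sum_k \ind{\BB}(k) W(k)$. The goal is a lower bound of the shape $S \gg D \cdot (\text{number of admissible } k) / (4A(\log x)^{3/2})$, which by pigeonhole produces a single admissible $k$ with $\sum_{d \mid L,\ 2dk+1\le x}\ind{\PP}(2dk+1) > D/(4A(\log x)^{3/2})$, i.e. the claimed inequality. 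So the whole game is a lower bound for $S$.

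To bound $S$ from below I would swap the order of summation: $S = \sum_{d \mid L, \ d \le x^B} \sum_{k \le x^{1-B}/d? } \ind{\BB}(k)\,\ind{\PP}(2dk+1)\,W(k)$ — here one must be slightly careful about whether the constraint is $k \le x^{1-B}$ or $2dk+1 \le x$; since $d \le x^B$ the two ranges are compatible and one works with $\kappa \le x^{1-B}y^{-1}$-type ranges to match Definition \ref{def:B}. For each such $d$, hypotheses (H1)--(H3) guarantee that $d \in \BB$, $|\mu(d)|=1$, $\gp{d} \le x^{B/\log_2 x}$ and $(d,\ell(x^B))=1$, so the defining inequality \eqref{eq:0.3AGP} of $\mathcal{B}$ applies and gives $\sum_\kappa \ind{\BB}(\kappa)\ind{\PP}(2d\kappa+1) \ge A^{-1} d x^{1-B} y^{-1} / (\phi(d)\log(dx^{1-B})\sqrt{\log x})$ (with an appropriate choice of the auxiliary parameter $y$, likely $y$ a small power of $x$ or a constant, chosen to make the weight $W$ and the $x^{1-B}y^{-1}$ range consistent). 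Summing this over $d \mid L$ with $d \le x^B$ contributes a factor $\sum_{d \mid L, d \le x^B} d/\phi(d)$; since $d/\phi(d) \ge 1$ this is at least $D$, giving $S \gg D x^{1-B}y^{-1}/((\log x)^{3/2}\log x)$ up to the $4A$ constant, which is exactly the right order.

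The main obstacle — and where hypotheses (H4) and (H5) enter — is controlling the "bad" $k$: after the pigeonhole step one has some $k$, but one must ensure it can be taken in $\BB$, coprime to $L$, and that the $k$'s for which the inner sum over $d$ is abnormally large (e.g. $k$ sharing factors with many $d \mid L$, or $2dk+1$ failing to be prime for structural reasons) do not dominate $S$. Concretely, one removes from the range of $k$ those divisible by a prime $q \mid L$ (to get $(k,L)=1$) and those making some $2dk+1$ divisible by a small prime; bounding the discarded mass requires $\sum_{q \mid L} 1/q$ and $\sum_{q \mid d} 1/q$ to be small, which is precisely what (H4) and (H5) provide — they make the sieve-type upper bounds for the excluded $k$ a small fraction of the main term. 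The delicate point is to make the bookkeeping of constants ($4A$, $16A$, $24AA'$, $5(1-B)$) consistent: one has to verify that with the stated numerical slack the lower bound for the retained part of $S$ still beats $D$ after dividing by $4A(\log x)^{3/2}$. I expect the proof to follow [AGP, §3] almost verbatim, with $\ind{\BB}(k)$ inserted throughout and the Bombieri--Vinogradov-type input replaced by membership in $\mathcal{B}$ via \eqref{eq:0.3AGP}; the only genuinely new checking is that the semigroup weight $\ind{\BB}$ does not disturb the sieve upper bounds used to discard bad $k$, which holds because $\BB$ has positive density inside a fixed arithmetic progression modulo $4$ and is itself amenable to the sieve.
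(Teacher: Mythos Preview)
Your overall strategy---double count over $(\kappa,d)$, apply pigeonhole to extract a single $k$, and feed each $d$ into the defining inequality \eqref{eq:0.3AGP} of $\mathcal{B}$---is exactly the paper's, and no auxiliary weight $W(k)$ is needed: one simply restricts the $\kappa$-sum to $(\kappa,L)=1$ from the outset and bounds the number of admissible $\kappa$ crudely by $x^{1-B}$ in the averaging step. The parameter $y$ in Definition~\ref{def:B} is not a single global choice; one takes $y=1$ for the main lower bound and $y=q$ (with $d \mapsto dq$) later when subtracting bad $\kappa$.

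There is, however, a genuine gap in your handling of the excluded $\kappa$, and the remark about ``those making some $2dk+1$ divisible by a small prime'' is off-track. After writing
\[
\sum_{\substack{\kappa \le x^{1-B}\\ (\kappa,L)=1}} \ind{\BB}(\kappa)\,\ind{\PP}(2d\kappa+1)
\ \ge\
\sum_{\kappa \le x^{1-B}} \ind{\BB}(\kappa)\,\ind{\PP}(2d\kappa+1)
\ -\ \sum_{q \mid L}\ \sum_{mq \le x^{1-B}} \ind{\BB}(mq)\,\ind{\PP}(2dmq+1),
\]
the primes $q \mid L$ must be treated in two \emph{different} ways, and this is precisely why (H4) and (H5) are separate hypotheses with different constants. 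If $q \mid d$ then $dq$ is not squarefree, so \eqref{eq:0.3AGP} is unavailable; one instead bounds $\sum_{mq} \ind{\PP}(2dmq+1)$ by $\pi(2dx^{1-B};dq,1)$ via Brun--Titchmarsh, and (H4) makes $\sqrt{\log x}\sum_{q\mid d}1/q$ small enough that this piece costs at most half the main term. If $q \nmid d$ then $dq \mid L$ is squarefree and the \emph{upper} bound in \eqref{eq:0.3AGP} applies with $d \mapsto dq$, $y \mapsto q$; here (H5) controls $\sum_{q\mid L}1/q$, and the factor $q/\phi(q)\le 6/5$ (all $q\mid L$ satisfy $q\ge 5$) accounts for the $24AA'/5$. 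Without this split your ``sieve-type upper bounds'' cannot be made precise, and the constants you flag as needing consistency checking will not close.
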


\begin{proof}
Let $x \ge x_3(B)$ with $x_3(B)$ sufficiently large (to be 
specified).
We have 
\[
  \sums[\kappa \le x^{1-B}][(\kappa,L) = 1]
   \ind{\BB}(\kappa)
    \sum_{d \mid L, \, d \le x^B} 
     \ind{\PP}(2d\kappa + 1)
 =
     \sum_{d \mid L, \, d \le x^B}
      \hspace{3pt}
       \sums[\kappa \le x^{1 - B}][(\kappa,L) = 1]
        \ind{\BB}(\kappa)
         \ind{\PP}(2d\kappa + 1), 
\]
so there must be some $k \in [1,x^{1 - B}] \cap \BB$ with 
$(k,L) = 1$ for which 
\begin{equation}
 \label{eq:thm3.1AGPi}
 x^{1 - B}
  \sum_{d \mid L, \, d \le x^B}
   \ind{\PP}(2dk + 1) 
    \ge 
     \sum_{d \mid L, \, d \le x^B}
      \hspace{3pt}
       \sums[\kappa \le x^{1 - B}][(\kappa,L) = 1]
        \ind{\BB}(\kappa)
         \ind{\PP}(2d\kappa + 1).     
\end{equation}
Let $d \mid L$, $d \le x^B$.
Note that  
$d$ is squarefree, 
$\gp{d} \le x^{B/\log_2 x}$ and  
$(d,\ell(x^B)) = 1$.
Observe that
\begin{align}
 \label{eq:thm3.1AGPii}
 \begin{split}
 &
 \sums[\mathclap{\kappa \le x^{1 - B}}]
      [\mathclap{(\kappa,L) = 1}]
  \,
  \ind{\BB}(\kappa)
   \ind{\PP}(2d\kappa + 1)
 \\
 & \hspace{45pt}
 \ge
  \sum_{\mathclap{\kappa \le x^{1 - B}}}
   \ind{\BB}(\kappa)
    \ind{\PP}(2d\kappa + 1)
  -
    \sum_{q \mid L}
     \sum_{mq \le x^{1 - B}}
      \ind{\BB}(mq)
       \ind{\PP}(2dmq + 1).
  \end{split}
\end{align}
We can assume that $x_3(B) \ge x_2(B)$;
hence by \eqref{eq:0.3AGP} we have  
\begin{equation}
 \label{eq:3.2AGP}
 A 
  \sqrt{\log x}
   \sum_{\kappa \le x^{1 - B}}
    \ind{\BB}(\kappa)
     \ind{\PP}(2d\kappa + 1)
      \ge  
       \frac{dx^{1 - B}}
            {\phi(d)\log(dx^{1-B})}
         \ge 
          \frac{dx^{1-B}}{\phi(d)\log x}.
\end{equation}
Now fix $q \mid L$ for the moment,
and consider the sum on $mq \le x^{1 - B}$ in 
\eqref{eq:thm3.1AGPii}.
Note that 
\[
 \sum_{mq \le x^{1 - B}}
  \ind{\BB}(mq)
   \ind{\PP}(2dq m + 1)
    \le
     \pi(2dx^{1 - B} + 1;dq,1)
      \le 
       \pi(2dx^{1 - B};dq,1) + 1.
\]
The Brun--Titchmarsh inequality of Montgomery and Vaughan 
\cite{MV} gives 
\[
  \pi(dx^{1 - B};2dq;1) 
   <
    \frac{4dx^{1 - B}}{\phi(dq)\log(x^{1 - B}/q)} 
     \le
      \frac{8}{q(1 - B)}
       \frac{dx^{1 - B}}{\phi(d)\log x} 
        - 1,
\]
provided $x_3(B)$ is sufficiently large, which we assume (recall that
$q \le x^{B/\log_2 x}$).
Using (H4) it follows that
\begin{equation}
 \label{eq:thm3.1AGPiii}
 2A
 \sqrt{\log x}
  \sum_{q \mid d}
   \sum_{mq \le x^{1 - B}}  
    \ind{\BB}(mq)
     \ind{\PP}(2dmq + 1)
      < 
       \frac{dx^{1 - B}}{\phi(d)\log x}.
\end{equation}
Now suppose $q \nmid d$.
For such $q$ we have $dq \mid L$, $|\mu(dq)| = 1$, $\gp{dq} \le x^{B/\log_2 x}$;
therefore, applying
\eqref{eq:0.3AGP} ($d \mapsto dq$, $y \mapsto q$) and 
noting that $q/\phi(q) \le 6/5$ for all $q \ge 5$, it follows that
\[
  \sqrt{\log x} \hspace{3pt}
   \sum_{\mathclap{m \le x^{1 - B}/q}}  
    \ind{\BB}(m)
     \ind{\PP}(2(dq)m + 1)
      \le 
       \frac{A'dx^{1-B}}{\phi(dq)\log(dqx^{1-B})}
        \le 
         \frac{6A'}{5q(1 - B)}
          \frac{dx^{1-B}}{\phi(d)\log x}.
\]
Since $\ind{\BB}(mq) = \ind{\BB}(m)$ we deduce from (H5) that
\begin{equation}
 \label{eq:thm3.1AGPiv}
  4A
  \sqrt{\log x}  
   \sum_{q \mid L,~q \nmid d}~
    \sum_{mq \le x^{1 - B}}
     \ind{\BB}(mq)\ind{\PP}(2dmq + 1)
      \le 
       \frac{dx^{1-B}}{\phi(d)\log x}.
\end{equation}
Combining \eqref{eq:thm3.1AGPii} -- \eqref{eq:thm3.1AGPiv} 
we see that
\[
 4A\sqrt{\log x} 
   \sums[\kappa \le x^{1 - B}]
        [(\kappa,L) = 1]
  \,
   \ind{\BB}(\kappa)
    \ind{\PP}(2d\kappa + 1)
     > 
      \frac{dx^{1-B}}{\phi(d)\log x}      
       \big(4 - 2 - 1\big)    
        \ge 
         \frac{x^{1 - B}}{\log x},
\]
and combining this with \eqref{eq:thm3.1AGPi} we obtain the stated result.
\end{proof}

\begin{proof}[Proof of Theorem \ref{thm:4.1AGP}]
Minor modifications notwithstanding, the proof follows that of 
\cite[Theorem 4.1]{AGP} verbatim, so let us only set up the proof 
here.
Let $E \in \mathcal{E}$, $B \in \mathcal{B}$, $\epsilon > 0$.
We can assume that $\epsilon < EB$.
Let $\theta \defeq (1 - E)^{-1}$ and let $y \ge 2$ be a parameter.
Put
\[
 \mathcal{Q}
  \defeq 
   \{ 
     q
      \in \BB \cap (y^{\theta}/\log y, y^{\theta}] :
       \gp{q-1} \le y
   \},
\]
and let $\ell$ be a positive integer (to be specified) satisfying 
$\log \ell \ll y^{\theta}/\log y$.
By \eqref{eq:piE} we have
\[
  |\mathcal{Q}\setminus\{q : q \mid \ell\}|
   \ge
    \frac{1}{2}
     \gamma_1(E)
      \frac{y^{\theta}}{\log(y^{\theta})}
\]      
for all large $y$ (we have 
$\pi(y^{\theta}/\log y) \ll y^{\theta}/(\log(y^{\theta})\log y)$ 
using Chebyshev's bound, as well as the well-known bound 
$\omega(\ell) \ll (\log \ell)/(\log_2 \ell)$).
Let 
$
 L
  \defeq  
   \prod_{q \in \mathcal{Q}, \, q \nmid \ell}
    q
$; then 
\[
  \log L
   \le 
    |\mathcal{Q}|
      \log(y^{\theta})
       \le 
        \pi(y^{\theta})\log(y^{\theta})
         \le 
           2y^{\theta}
\]           
for all large $y$. 
%
%
%
Let $\delta \defeq \epsilon \theta/(4B)$ and let 
$x \defeq \e^{y^{1 + \delta}}$.
We have 
\[
 \sum_{q \mid L}
  \frac{1}{q}
   \le 
    \sums[{q \in (y^{\theta}/\log y, y^{\theta}]}]
     \frac{1}{q}
      \le 
       2
       \frac{\log_2 y}{\theta\log y}
        \le
         \frac{5(1 - B)}{24AA'}
\]
for all sufficiently large $y$.
For any $d \mid L$ with $d \le x^B$ we have 
$\omega(d) \le 2\log x/\log_2 x$ (if $x$ is large enough), and therefore
\[
 \sum_{q \mid d}
  \frac{1}{q}
   \le 
    \frac{\log y}{y^{\theta}}
     \frac{2\log x}{\log_2 x} 
      <
       \frac{2\log x}{(\log x)^{\theta/(1 + \delta)}}
        < 
         \frac{1 - B}{16A\sqrt{\log x}}
\]
for all large $y$ {\em provided that} $\theta/(1 + \delta) > 3/2$. 
Since
$$
4\delta = \epsilon\theta/B < \theta E = E/(1 - E),$$
we have 
\[
 2\theta - 3\delta 
  =
   2\bigg(1 + \frac{E}{1 - E}\bigg) - 3\delta
    > 
     2\bigg(1 + \frac{E}{1 - E}\bigg) 
      - \frac{3E}{4(1 - E)}
       =
        2 + \frac{5E}{4(1 - E)},
\]
and this is greater than three (and hence $\theta/(1 + \delta) > 3/2$ as required) 
whenever $5E/\big(4(1 - E)\big) > 1$, i.e., $E > 4/9$, which we assume. 

We now specify that $\ell \defeq \ell(x^B)$.
We clearly have $\ell(x^B) \le x^B$ (cf.\ Lemma~\ref{lem:4.1}), 
so the requirement that $\log \ell \ll y^{\theta}/\log y$ is 
satisfied:
\[
 \log \ell 
  \le \log x 
   = y^{1 + \delta} 
    < y^{2\theta/3} 
     \ll y^{\theta}/\log y.
\]

We can apply Lemma \ref{thm:3.1AGP} with $B,x,L,\ell=\ell(x^B)$.
Thus, for all sufficiently large values of $y$, there is an integer 
$k \in \BB$ coprime to $L$, for which the set $\mathcal{P}$ of 
primes $p \le x$ with $p = 2dk + 1$ for some divisor $d$ of $L$, 
satisfies 
\[
  |\mathcal{P}|
   \ge 
    \frac{\#\{d \mid L : d \le x^B\}}{4A(\log x)^{3/2}}.
\]
We leave the reader to pursue 
the remainder of the proof in \cite{AGP}.
\end{proof}

\section{The sieve}
 \label{sec:sieve}

\subsection*{Notational caveat} 
This section can be read independently of \S \ref{sec:AGP}, and 
below $A,B,d,k$ are not the same as in \S \ref{sec:AGP}.

\subsection*{Level of distribution}
 \label{subsec:bomvin}
We first quote part of \cite[Lemma 4.1]{BFM14}, which gives a 
qualitative extension of the classical (exceptional) zero-free 
region for Dirichlet $L$-functions in the case of smooth moduli.
Its proof uses bounds for character sums to smooth moduli due to  Chang \cite{CHA14}.

\begin{lemma}
 \label{lem:4.1}
Let $T \ge 3$.
Among all primitive Dirichlet characters $\chi \bmod \ell$ to 
moduli $\ell$ satisfying $\ell \le T$ and 
$\gp{\ell} \le T^{1/\log_2 T}$, there is at most one for which the 
associated $L$-function $L(s,\chi)$ has a zero in the region 
\begin{align}
 \label{eq:4.6} 
  \Re(s) > 1 - c \log_2 T /\log T, \quad 
  |\Im(s)| \le \exp\big(\sqrt{\log T}/\log_2 T\big),
\end{align}
where $c > 0$ is a certain \textup{(}small\textup{)} absolute 
constant.
If such a character $\chi \bmod \ell$ exists, then $\chi$ is real 
and $L(s,\chi)$ has just one zero in the region \eqref{eq:4.6}, 
which is real and simple, and we set $\ell(T) \defeq \ell$. 
Otherwise we set $\ell(T) \defeq 1$.
\end{lemma}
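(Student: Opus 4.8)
The plan is to follow the argument of \cite[Lemma~4.1]{BFM14}, which combines Chang's bounds for incomplete character sums to smooth moduli \cite{CHA14} with the classical de~la~Vall\'ee~Poussin machinery and the classical exceptional-character argument. The governing idea is that, for moduli $\ell\le T$ with $\gp{\ell}\le T^{1/\log_2 T}$, Chang's estimates play the part that bounds for exponential sums play in the Vinogradov--Korobov method: the cancellation they supply in the sums $\sum_{n\le H}\chi(n)$ over ranges $H$ well below the Burgess threshold $\ell^{1/4+o(1)}$ feeds, via the standard passage from such sums to bounds on $L(s,\chi)$ and $L'/L(s,\chi)$ near $\Re(s)=1$, into the ``$3$--$4$--$1$'' argument and widens the zero-free region from $\Re(s)>1-c/\log\ell$ to one of the shape $\Re(s)>1-c\log_2 T/\log T$, uniformly in such $\ell$ and in the height range $|\Im(s)|\le\exp(\sqrt{\log T}/\log_2 T)$, apart from the possible real zero of a real character. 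So the first step is to make this passage quantitative: record the character-sum bound from \cite{CHA14}, convert it into the bounds on $L$ and $L'/L$ valid throughout the region, and carry out the de~la~Vall\'ee~Poussin computation to obtain the individual zero-free region.

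With the individual region available, the rest is classical. I would first treat a primitive $\chi\bmod\ell$ with $\chi^2$ non-principal: feeding the improved bounds on the logarithmic derivatives (and the pole of $\zeta$ at $1$) into the de~la~Vall\'ee~Poussin inequality $-\Re\bigl(3\zeta'/\zeta(\sigma)+4L'/L(\sigma+it,\chi)+L'/L(\sigma+2it,\chi^2)\bigr)\ge 0$, evaluated at $\sigma=1+\eta$ with $\eta$ a small multiple of the distance from a putative zero to the line $\Re(s)=1$, shows $L(s,\chi)$ has no zero in the region. The same computation with $\chi^2$ principal (the neighbourhood of $t=0$ handled in the usual way) shows a real primitive $\chi$ has no non-real zero in the region, so such a $\chi$ can contribute at most one zero there, which is real and simple since non-real zeros of a real $L$-function occur in conjugate pairs and the inequality likewise excludes a conjugate pair, or a double real zero, that close to $1$. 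To rule out two exceptional real characters in the family I would run the classical argument going back to Landau and Page: if $\chi_1\bmod\ell_1$ and $\chi_2\bmod\ell_2$ were distinct real primitive characters, both with $\ell_i\le T$ and $\gp{\ell_i}\le T^{1/\log_2 T}$, having real zeros $\beta_1,\beta_2$ in the region, form $F(s)=\zeta(s)L(s,\chi_1)L(s,\chi_2)L(s,\chi_1\chi_2)$, which has non-negative Dirichlet coefficients, leading coefficient $1$, a simple pole at $s=1$ with positive residue, and vanishes at both $\beta_1$ and $\beta_2$; exploiting the non-negativity of the coefficients of $-F'/F$ together with the conductor-dependent estimates for the constituent $L$-functions --- now sharpened by the smoothness hypothesis --- yields a lower bound $\min(1-\beta_1,1-\beta_2)\gg\log_2 T/\log T$, contradicting the hypothesis. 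I would also note here that the primitive character inducing $\chi_1\chi_2$ has a smooth conductor dividing $\ell_1\ell_2\le T^2$, and since $(T^2)^{1/\log_2(T^2)}\sim T^{2/\log_2 T}$ this auxiliary modulus still lies in the range of validity of the smooth-modulus estimates after adjusting $c$. Finally, set $\ell(T)$ to be the modulus of the unique exceptional character if it exists, and $\ell(T)=1$ otherwise.

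The hard part will be the first ingredient --- extracting from Chang's character-sum bounds a zero-free region with the precise gain of a factor $\log_2 T$ over the de~la~Vall\'ee~Poussin region, and obtaining it \emph{uniformly} for all smooth $\ell\le T$ \emph{and} throughout the height range $|\Im(s)|\le\exp(\sqrt{\log T}/\log_2 T)$: the large-$|t|$ regime is where converting short-sum cancellation into pointwise bounds for $L$ and $L'/L$ is most delicate, and one must verify that the smoothness hypothesis $\gp{\ell}\le T^{1/\log_2 T}$ is quantitatively strong enough over this whole range, as well as for the slightly larger auxiliary moduli (up to $T^2$) appearing in the last step. Once that uniform input is in place, everything else is the classical exceptional-zero argument run essentially verbatim.
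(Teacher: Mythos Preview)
The paper does not prove this lemma at all: it is explicitly introduced as a quotation of part of \cite[Lemma~4.1]{BFM14}, with only the remark that its proof uses Chang's bounds \cite{CHA14} for character sums to smooth moduli. So there is nothing to compare your argument against beyond that one-line attribution.

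Your sketch is nonetheless an accurate outline of how the result is actually proved in \cite{BFM14}: feed Chang's smooth-modulus character-sum bounds into the standard machinery (approximate $L$ and $L'/L$ near $\Re(s)=1$), run the $3$--$4$--$1$ inequality to get the widened zero-free region for complex $\chi$ and for non-real zeros of real $\chi$, and then invoke the Landau--Page argument with $\zeta L(\cdot,\chi_1)L(\cdot,\chi_2)L(\cdot,\chi_1\chi_2)$ to exclude two distinct exceptional real characters. Your observation that the auxiliary modulus dividing $\ell_1\ell_2\le T^2$ remains smooth enough for Chang's bounds (after a harmless adjustment of $c$) is exactly the point one must check. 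For the purposes of the present paper, however, a bare citation to \cite[Lemma~4.1]{BFM14} is all that is needed, and reproducing the proof would be out of place.
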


\begin{remark}
 \label{rem:excmod}
If $\chi \bmod \ell$ is real and primitive, then 
$\ell = 2^{\nu}\lh$ where $\nu \le 3$ and $\lh$ is odd and 
squarefree.
By Siegel's theorem \cite[\S21, (4)]{DAV}, if $\beta$ is any real 
zero of $L(s,\chi)$ then $\ell \gg_A (1 - \beta)^{-A}$ for any 
$A > 1$.
Hence, if $\ell = \ell(T)$ is as in
Lemma~\ref{lem:4.1} and  $\ell \ne 1$, then 
\begin{equation}
 \label{eq:omegaqT}
  \ell
   \gg_A
   (\log \ell/\log_2 \ell)^A.  
\end{equation}
The implicit constant is ineffective for $A > 2$, but it is effective 
for $2 \ge A > 1$, and consequently the implicit constant in 
\eqref{eq:4.10} below is effective for $A < 2$. 
\end{remark}

The following statement is a consequence of \cite[Theorem 4.1]{BFM14}, whose 
proof combines standard zero density estimates with the zero free 
region for smooth moduli given in Lemma~\ref{lem:4.1}.

\begin{theorem} 
 \label{thm:4.2}
Fix $\eta > 0$.
Let $x \ge 3^{1/\eta}$ be a number, and let $k \ge 1$ be a 
squarefree integer such that  
$
 P^{+}(k) < x^{\eta/\log_2 x}
$, 
$
 k < x^{\eta} 
$
and $(k,\ell) = 1$, where $\ell \defeq \ell(x^{\eta})$ as in Lemma 
\ref{lem:4.1}.
If $\eta = \eta(A,\delta)$ is sufficiently small in 
terms of any fixed $A > 0$ and $\delta \in (0,1/2)$, then 
\begin{align}
 \label{eq:4.10}
  \sum_{r \le \sqrt{x}/x^{\delta}}
   \max_{(a,kr) = 1}
    \left| \pi(x;kr,a) -  \frac{\pi(x)}{\phi(kr)} \right|
     \ll_{\delta,A}
      \frac{x}{\phi(k)(\log x)^A}.
\end{align}
\end{theorem}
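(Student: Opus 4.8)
The plan is to deduce the Bombieri--Vinogradov-type estimate \eqref{eq:4.10} from the hybrid zero-density/zero-free-region input recorded in Lemma~\ref{lem:4.1}, following the standard route by which one proves the classical Bombieri--Vinogradov theorem but keeping careful track of the smooth-modulus restriction on $k$. First I would reduce the problem to bounding, for each modulus $kr$ with $r \le \sqrt x/x^{\delta}$, the error term $\psi(x;kr,a) - x/\phi(kr)$ in the prime-counting function $\psi$ via explicit formula / partial summation, so that it suffices to control contributions of zeros $\rho = \beta + i\gamma$ of $L(s,\chi)$ for characters $\chi$ modulo $q \mid kr$ with $q \le kr \le k\sqrt x \le x^{\eta}\sqrt x$. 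Since $\eta$ will be taken very small, all relevant moduli lie in the range $[1, x^{1/2 + \eta}]$, and every modulus $q$ appearing is of the form $q = q_1 q_2$ with $q_1 \mid k$ smooth (so $P^+(q_1) < x^{\eta/\log_2 x}$) and $q_2 \mid r$ arbitrary but $q_2 \le \sqrt x$.

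The key steps, in order, are: (i) apply the large sieve / Gallagher-style zero-density estimate $\sum_{q\le Q}\sum^{*}_{\chi \bmod q} N(\sigma,T,\chi) \ll (Q^2 T)^{c(1-\sigma)}(\log QT)^{c'}$ for the number of zeros with $\Re(s)\ge\sigma$, $|\Im(s)|\le T$, taken over primitive characters; (ii) split off the possible exceptional character identified in Lemma~\ref{lem:4.1} --- this is exactly the real character $\chi_1 \bmod \ell(x^{\eta})$, and the hypothesis $(k,\ell)=1$ together with the constraint that $q_1 \mid k$ is smooth guarantees that $\ell$ does not divide any modulus $kr$ arising here once $\eta$ is small enough relative to $\delta$ (so that a smooth modulus cannot secretly contain $\ell$ as a factor), hence the Siegel zero makes no contribution to the sum over $kr$; (iii) for all remaining characters, invoke the zero-free region \eqref{eq:4.6} of Lemma~\ref{lem:4.1} (extended to the non-smooth factor $q_2 \le \sqrt x$ by the classical Vinogradov--Korobov or even just the classical region, whichever \cite{BFM14} uses) so that every zero of a non-exceptional $L(s,\chi)$ with modulus in our range satisfies $\beta \le 1 - c\log_2 x/\log x$; (iv) combine the density estimate with this zero-free region, as in the classical argument, to get $\sum_{r}\max_a|\psi(x;kr,a) - x/\phi(kr)| \ll x(\log x)^{O(1)}\exp(-c''(\log x)(\log_2 x/\log x)) \cdot (\text{factor }1/\phi(k))$; the point is that the extra $\log_2 x$ in the exponent of the zero-free region upgrades the usual $\exp(-c\sqrt{\log x})$ savings of Bombieri--Vinogradov into a savings of the shape $\exp(-c''\log_2 x)^{\text{power}}$, which beats any fixed power $(\log x)^{-A}$ once $\eta$ (hence $c$) is chosen small in terms of $A$; (v) track the $1/\phi(k)$: since $k$ is fixed and $q_1 \mid k$, pulling the $k$-part out of $\phi(kr) = \phi(k)\phi(r)$ (valid as $(k,r)$ ranges — strictly one sums over $r$ without coprimality to $k$, but the standard device of writing $r = r'g$ with $g \mid k^{\infty}$ absorbs this at the cost of a harmless $\sum_{g\mid k^\infty} 1/g \ll k/\phi(k)$ factor) yields the claimed $x/(\phi(k)(\log x)^A)$.

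The main obstacle is step (ii)--(iii): one must verify that, after restricting to moduli $kr$ with $k$ smooth and coprime to $\ell(x^{\eta})$, the \emph{only} character whose $L$-function could have a zero violating the good region \eqref{eq:4.6} — namely the exceptional character of Lemma~\ref{lem:4.1} — genuinely never divides any modulus in play, and simultaneously that characters to moduli $q_2 \le \sqrt x$ coming from the (unrestricted) divisors $r$ are governed by a classical zero-free region strong enough that their combined contribution, weighted by the density estimate over $q_2 \le \sqrt x$, still beats $(\log x)^{-A}$. This is precisely the content of \cite[Theorem 4.1]{BFM14}, so in practice I would extract \eqref{eq:4.10} by citing that result and merely indicating how the coprimality condition $(k,\ell)=1$ and the smoothness of $k$ feed into its hypotheses; the bookkeeping of the $1/\phi(k)$ normalization is the only genuinely new (but routine) part.
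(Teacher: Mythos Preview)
Your plan to ultimately cite \cite[Theorem~4.1]{BFM14} is right, but there is a genuine gap in step~(ii). You assert that ``the hypothesis $(k,\ell)=1$ together with the constraint that $q_1 \mid k$ is smooth guarantees that $\ell$ does not divide any modulus $kr$ arising here.'' This is false: the sum in \eqref{eq:4.10} runs over \emph{all} integers $r \le \sqrt{x}/x^{\delta}$ with no coprimality restriction, so nothing prevents $\ell$ (or its odd squarefree part $\hat\ell$) from dividing $r$, and hence $kr$. The exceptional character can therefore appear among the characters modulo $kr$, and your argument as written does not dispose of it. Relatedly, \cite[Theorem~4.1]{BFM14} itself carries the side condition $(r,P^+(\ell))=1$ on the outer sum, so a straight citation does not yield \eqref{eq:4.10} either.

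What the paper actually does is precisely to close this gap. First, it observes that the proof of \cite[Theorem~4.1]{BFM14} goes through with $P^+(\ell)$ replaced by any fixed prime divisor of $\ell$; summing over the $O(\log x^{\eta}/\log_2 x^{\eta})$ odd prime divisors of $\ell$ (and bumping $A$ to $A+1$) handles all $r$ with $\hat\ell \nmid r$. For the complementary range $\hat\ell \mid r$, the paper abandons the zero machinery entirely: Brun--Titchmarsh gives $|\Delta(x;kr,a)| \ll x/(\phi(kr)\log x)$ trivially, and summing over such $r$ produces a factor $1/\phi(\hat\ell)$. The crucial final input is Siegel's theorem in the form \eqref{eq:omegaqT}, which forces $\ell$ --- and hence $\phi(\hat\ell)$ --- to be large, namely $\phi(\hat\ell) \gg_A (\log x^{\eta})^A/(\log_2 x^{\eta})^{A+1}$, so this piece is also $\ll_{\delta,A} x/(\phi(k)(\log x)^A)$. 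Your sketch contains neither the split $\hat\ell \mid r$ versus $\hat\ell \nmid r$ nor the appeal to Siegel's lower bound on $\ell$, and without them the proof does not close.
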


\begin{proof}
Let us write $\Delta(x;kr,a)$ for $\pi(x;kr,a) - \pi(x)/\phi(kr)$.
The bound 
\begin{equation}
 \label{eq:BFMBVT}
  \sums[r \le \sqrt{x}/x^{\delta}][(r,\gp{\ell}) = 1]
   \max_{(a,kr) = 1}
   |\Delta(x;kr,a)|
     \ll_{\delta,A}
      \frac{x}{\phi(k)(\log x)^A}
\end{equation}
is%
\footnote{%
Actually, in \cite[Theorem 4.1]{BFM14} the primes are counted with 
a logarithmic weight, from which one can deduce, via partial 
summation, the bound as stated in \eqref{eq:BFMBVT},
and this is the form in which the bound is ultimately used in \cite{BFM14}.
}
\cite[Theorem 4.1]{BFM14} in our notation, except that we have the 
stronger hypothesis that $(k,\ell) = 1$,
whereas in \cite{BFM14} it is only
assumed that $(k,\gp{\ell}) = 1$. 
If $\ell = 1$ then we are done, so let us assume $\ell \ne 1$.
By Remark \ref{rem:excmod}, $\ell = 2^{\nu}\lh$, where 
$\nu \le 3$ and $\lh$ is a product of 
$O(\log x^{\eta}/\log_2 x^{\eta})$ distinct odd primes.
The bound \eqref{eq:BFMBVT} holds if $\gp{\ell}$ is replaced by 
any prime divisor of $\ell$, as is manifest from the proof of 
\cite[Theorem 4.1]{BFM14} (the crux being that $\ell \nmid r$). 
Summing over the prime divisors of $\lh$, replacing $A$ by 
$A + 1$ in \eqref{eq:BFMBVT}, and recalling that $\eta$ depends only 
on $A$ and $\delta$, we deduce that 
\begin{equation}
 \label{eq:BFMBVTii}
  \sums[r \le \sqrt{x}/x^{\delta}][\lh\,\nmid\,r]
   \max_{(a,kr) = 1}
   |\Delta(x;kr,a)|
     \ll_{\delta,A}
      \frac{x}{\phi(k)(\log x)^A}.
\end{equation}

On the other hand, using $\pi(x) \ll x/\log x$ together with the Brun--Titchmarsh 
inequality \cite[(13.3) et seq.]{FI10} we obtain that, uniformly 
for $r \le \sqrt{x}$ with $\lh \mid r$ and $(a,kr) = 1$,  
\[
 \Delta(x;kr,a)
  \ll
   \frac{x}{\phi(kr)\log x}.
\]
For any such $r$, write $r=\lh r_1r_2$, where
$r_1$ is composed of primes dividing $\ell$,
and $(r_2,\ell)=1$.  Note that $r_1 \le \sqrt{x}/(r_2\lh )$,
$(kr_2,\lh)=1$ (since $(k,\ell)=1$),
and $\phi(kr)\ge\phi(k)\phi(\lh)\phi(r_1)$;
therefore,
\begin{equation}
 \label{eq:qdivr}
  \sums[r \le \sqrt{x}/x^{\delta}][\lh\,\mid\,r]
   \max_{(a,kr) = 1}
   |\Delta(x;kr,a)|
     \ll
      \frac{x}{\phi(k)\phi(\lh\hskip1pt)\log x}
       \sum_{r_1 \le \sqrt{x}}
        \frac{1}{\phi(r_1)}
    \ll
     \frac{x}{\phi(k)\phi(\lh\hskip1pt)}.
\end{equation}
Since $\ell/\phi(\ell) \ll \log_2 \ell \ll \log_2 x^{\eta}$ and 
$\ell \gg_A (\log x^{\eta}/\log_2 x^{\eta})^A$ by 
\eqref{eq:omegaqT}, we see that 
\[
 1/\phi(\lh)
  \ll
   (\log_2 x^{\eta})^{A + 1}/(\log x^{\eta})^A,
\]
thus combining \eqref{eq:BFMBVTii} with \eqref{eq:qdivr} gives the 
result (with $A$ replaced by any smaller constant).
\end{proof}

\subsection*{Setup \& key estimate}
 \label{subsec:sievesetup}
Equipped with our level of distribution result, establishing our 
key estimate involves a routine application of the semi-linear 
sieve and a ``switching trick'' (as in \cite[Theorem 14.8]{FI10}).
We are to sieve a sequence of primes in arithmetic progression by 
the primes in $\PP \cap 3 \pod{4}$.

For $x \ge 3$, let  
\[
 P(x) \defeq \prods[p < x][p \equiv 3 \pod{4}] p, 
\]
let
\begin{equation}
 \label{eq:def:VW}
  V(x)
   \defeq 
    \prods[p < x][p \equiv 3 \pod{4}]
     \bigg(1 - \frac{1}{\phi(p)}\bigg) 
   =
    \prods[p < x][p \equiv 3 \pod{4}]
     \bigg(1 - \frac{1}{p}\bigg) 
       \bigg(1 + \frac{1}{p(p-2)}\bigg)^{-1}
\end{equation}
and let
\[
  W(x)
   \defeq 
    \prods[p < x][\mathclap{p \equiv 1 \pod{4}}]
     \bigg(
      1 + \frac{1}{\phi(p)} + \frac{1}{\phi(p^2)} + \cdots 
     \bigg)
  =
    \prods[p < x][p \equiv 1 \pod{4}]
     \bigg(1 - \frac{1}{p}\bigg)^{-1}
      \hspace{-3pt}
       \bigg(1 + \frac{1}{p(p-1)}\bigg).
\]
For future reference, we record here that by Mertens' theorem 
one has
\begin{equation}
 \label{eq:WonV}
  W(x)/V(x)
   = {\textstyle \frac{1}{2}}
    A_1A_3\e^{\gamma}\log x + O(1), 
\end{equation}
where 
\begin{equation}
 \label{eq:def:A0}
  A_1
   \defeq 
    \prod_{p \equiv 1 \pod{4}}
     \bigg(1 + \frac{1}{p(p-1)}\bigg)
 \quad 
  \text{and}
   \quad 
    A_3
     \defeq 
      \prod_{p \equiv 3 \pod{4}}
       \bigg(1 + \frac{1}{p(p-2)}\bigg).
\end{equation}
By Mertens' theorem we also have, for $2 \le x < y$ and $j = 1,3$, 
\begin{equation}
 \label{eq:mert1p}
 \sums[x \le p < y][p \equiv j \pod{4}]
  \frac{1}{p}
   =
  \frac{1}{2}\log\bigg(\frac{\log y}{\log x}\bigg)
   +
    O\bigg(\frac{1}{\log x}\bigg)
 \le
  \frac{\log(y/x)}{2\log x}
   \bigg(1 + O\bigg(\frac{1}{\log(y/x)}\bigg)\bigg),
\end{equation}
and furthermore,
\begin{equation}
 \label{eq:sievedim}
 \frac{V(x)}{V(y)}, 
  \frac{W(y)}{W(x)}
   =
    \bigg(\frac{\log x}{\log y}\bigg)^{1/2}
     \bigg(1 + O\bigg(\frac{1}{\log y}\bigg) \bigg). 
\end{equation}
Indeed, we actually have (cf.\ \cite[(14.21)--(14.24)]{FI10})
\begin{equation}
 \label{eq:mert34}
  1/V(x) 
   =
    2A_3B\sqrt{(\e^{\gamma}/\pi)\log x}
     \big(1 + O(1/\log x)\big)    
\end{equation}
and 
\[
  W(x)
   =
    (\pi A_1/4B)
     \sqrt{(\e^{\gamma}/\pi) \log x}
      \big(1 + O(1/\log x)\big), 
\]      
where 
\[
 B 
  \defeq 
   \frac{1}{\sqrt{2}}
    \prod_{p \equiv 3 \pod{4}}
     \bigg(1 - \frac{1}{p^2}\bigg)^{-1/2}
   =
   \frac{\pi}{4}
    \prod_{p \equiv 1 \pod{4}}
     \bigg(1 - \frac{1}{p^2}\bigg)^{1/2}     
      =
       0.764223\ldots
\]
is the Landau--Ramanujan constant.
Finally, let $f(s)$ and $F(s)$ be the continuous solutions to the 
following system of differential-difference equations:
\[
\setlength{\tabcolsep}{5pt}
\begin{tabular}%
{%
  >{$}r<{$} 
  @{ $=$ } 
  >{$}l<{$}
  @{ \hspace{15pt} }
  >{$}l<{$} 
  @{ \hspace{18pt} }    
  >{$}r<{$} 
  @{ $=$ }  
  >{$}l<{$} 
  @{ \hspace{15pt} }  
  >{$}l<{$}
}
 \sqrt{s}F(s) & 2\sqrt{\e^{\gamma}/\pi} & (0 \le s \le 2), & (\sqrt{s}F(s))' & f(s - 1)/2\sqrt{s} & (s > 0) \\
         f(1) & 0                       &                  & (\sqrt{s}f(s))' & F(s - 1)/2\sqrt{s} & (s > 1). 
\end{tabular}
\]
For $1 \le s \le 3$ we have \cite[p.275]{FI10} 
\begin{equation}
 \label{eq:fs}
 \frac{\sqrt{s}f(s)}{\sqrt{\e^{\gamma}/\pi}}
   =
     \int_1^s 
      \frac{\dd t}{\sqrt{t(\log t)}}
   =
     \log\br{1 + 2(s - 1) + 2\sqrt{s(s-1)}}.
\end{equation}

\begin{lemma}
 \label{lem:sievelem}
Fix $\eta > 0$.
Let $x \ge 3^{1/\eta}$ be a number, and let $k \ge 1$ be a  
squarefree integer, such that
$
 P^{+}(k) < x^{\eta/\log_2 x}
$, 
$
 k < x^{\eta} 
$
and $(k,\ell) = 1$, with $\ell \defeq \ell(x^{\eta})$ as in Lemma 
\ref{lem:4.1}.
If $k \in \BB$ and $\eta$ is sufficiently small, then   
\begin{align}
 \label{eq:sieveup}
  \sum_{m \le x}
   \ind{\BB}(m)
    \ind{\PP}(2km + 1)
     \asymp 
      \frac{kx}{\phi(k)(\log x)^{3/2}}.
\end{align}
\end{lemma}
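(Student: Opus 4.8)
I would prove \eqref{eq:sieveup} by bounding $S \defeq \sum_{m \le x}\ind{\BB}(m)\ind{\PP}(2km+1)$ from above and below, treating it as a sifting function. Since $k \in \BB$ is odd, $\ind{\BB}(m) = 1$ precisely when $m$ has no prime factor in $\mathcal{P} \defeq \{2\}\cup(\PP\cap 3 \pod{4})$, so $S = S(\mathcal{A},\mathcal{P},z)$ for every $z \ge 2x$, where $S(\mathcal{A},\mathcal{P},z)$ is the sifting function of $\mathcal{A}\defeq(\ind{\PP}(2km+1))_{m\le x}$ by the primes of $\mathcal{P}$ below $z$. For $d$ supported on $\mathcal{P}$ we have $\#\mathcal{A}_d = \pi(2kx+1;2kd,1)$, so, with $X\defeq\#\mathcal{A}=\pi(2kx+1;2k,1)$, the density $\rho$ is multiplicative with $\rho(2)/2 = 1/2$ and $\rho(p)/p = 1/\phi(p)$ for $p\equiv 3 \pod{4}$; hence \eqref{eq:mert1p} shows the sifting problem has dimension $1/2$, and its local product is $\tfrac12 V(z)$ with $V$ as in \eqref{eq:def:VW}, which by \eqref{eq:mert34} is $\asymp(\log z)^{-1/2}$. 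Taking $z=x^{\tau}$ with a fixed $\tau\in(0,1)$, Brun--Titchmarsh bounds $X$ from above and the $d=1$ case of Theorem~\ref{thm:4.2} bounds it from below (with $\log(2kx)\asymp\log x$, since $k<x^{\eta}$), so $XV(z)\asymp kx/(\phi(k)(\log x)^{3/2})$, the order of magnitude we are after.

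For the level of distribution I would invoke Theorem~\ref{thm:4.2}. By the Chinese remainder theorem, $\#\mathcal{A}_d=\pi(2kx+1;2kd,1)$ reduces (up to $O(1)$) to counting primes in residue classes to moduli $kr$ (for $d$ odd) or $4kr$ (for $d$ even) with $r\le D$ — legitimate because $k$ is odd and because a genuine exceptional modulus $\ell(x^{\eta})\ne 1$ exceeds $8$ by \eqref{eq:omegaqT}, hence is odd — so Theorem~\ref{thm:4.2}, applied with $x$ replaced by $2kx$ and fixed modulus $k$, respectively $4k$, gives
\[
 \sum_{d<D}\Bigl|\#\mathcal{A}_d-\tfrac{\rho(d)}{d}X\Bigr| \ll_{A} \frac{kx}{\phi(k)(\log x)^{A}}
\]
for every fixed $A$ (the sum being over squarefree $d$ supported on $\mathcal{P}$), provided $D\le\sqrt{x}$ and $\eta$ is small enough in terms of $A$ and $D$; the hypotheses of the lemma on $k$ translate directly into those of Theorem~\ref{thm:4.2}. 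With this remainder bound, the semi-linear sieve \cite[\S14]{FI10} — governed by the functions $f,F$ of \eqref{eq:fs} — gives, for $z=D^{1/s}$ with $D=x^{1/2-\delta'}$ ($\delta'>0$ small) and any fixed $s\ge 1$,
\[
 \bigl(\tfrac12 f(s)-o(1)\bigr)V(z)X\ \le\ S(\mathcal{A},\mathcal{P},z)\ \le\ \bigl(\tfrac12 F(s)+o(1)\bigr)V(z)X.
\]
Fixing any $s\ge 1$ yields the upper bound at once: $S\le S(\mathcal{A},\mathcal{P},z)\ll XV(z)\ll kx/(\phi(k)(\log x)^{3/2})$.

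For the lower bound, $f(s)>0$ only for $s>1$, so the sifting level reaches only $z$ slightly below $\sqrt{x}$, and the estimate above shows merely that $m$ has no prime factor $\equiv 3 \pod{4}$ below $z$; to discard those $m$ that retain a (then necessarily large) prime factor $\equiv 3 \pod{4}$, I would apply the switching trick of \cite[Theorem~14.8]{FI10}. Fix $s>1$. By Buchstab's identity,
\[
 S = S(\mathcal{A},\mathcal{P},z) - \sum_{\substack{z\le q<2x\\ q\equiv 3 \pod{4}}} S(\mathcal{A}_q,\mathcal{P},q),
\]
where $S(\mathcal{A},\mathcal{P},z)\gg kx/(\phi(k)(\log x)^{3/2})$ as above. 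For $q>\sqrt{x}$ the term $S(\mathcal{A}_q,\mathcal{P},q)$ counts $m'\le x/q$ with $m'\in\BB$ and $2kqm'+1$ prime; switching the roles of the two variables (summing over $m'\in\BB$, $m'<\sqrt{x}$, first), the number of primes $q\in(\sqrt{x},x/m']$ with $q\equiv 3 \pod{4}$ and $2km'q+1$ prime is $\ll \mathcal{S}(2km')\,(x/m')/(\log x)^{2}$ by a routine upper-bound (linear or Selberg) sieve, where $\mathcal{S}$ is the attendant singular series; since $\sum_{m'\in\BB,\,m'<\sqrt{x}}\mathcal{S}(2km')/m'\ll(k/\phi(k))(\log x)^{1/2}$ (a half-dimensional Mertens sum), this part of the Buchstab sum is $\ll kx/(\phi(k)(\log x)^{3/2})$. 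The range $z\le q\le\sqrt{x}$, whose exponent-length is $O(1-1/s)+O(\delta')$, is handled the same way and contributes $o(kx/(\phi(k)(\log x)^{3/2}))$. Comparing the two constants — equivalently, invoking the numerical inequality behind \cite[Theorem~14.8]{FI10} — and choosing $s$, then $\tau,\delta',\eta$, suitably, I conclude $S\gg kx/(\phi(k)(\log x)^{3/2})$; together with the upper bound this is \eqref{eq:sieveup}.

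The main obstacle is exactly this numerical comparison in the switching step: one must check that the gain $\tfrac12 f(s)$ in the main lower bound exceeds the constant lost to the switched ``large prime factor'' terms for some admissible $s>1$. This is the computation carried out in \cite[Theorem~14.8]{FI10}, and it succeeds here because the relevant sieve has dimension $1/2$, below the critical value $1$. A secondary, purely technical point is verifying the hypotheses of Theorem~\ref{thm:4.2} for the auxiliary moduli ($2kx$ in place of $x$, and $k$ or $4k$ as the fixed modulus), the only subtlety being the interplay with the putative exceptional modulus $\ell(x^{\eta})$.
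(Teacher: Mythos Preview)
Your strategy---semi-linear sieve for both bounds, with a switching trick for the lower bound---matches the paper's. The upper-bound half is fine and essentially identical to the paper's. The lower-bound half, however, is organised differently from the paper in a way that creates a real tension in your exposition.

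The paper's key simplification, which you omit, is to restrict at the outset to $m\equiv 1\pmod 4$ (equivalently $2km+1\equiv 3\pmod 8$); this costs nothing, since $\ind{\BB}(m)=1$ already forces it. With that restriction in place, sifting up to $\sqrt{x}$ is \emph{already} complete: any $m\le x$ with $m\equiv 1\pmod 4$ and no prime factor $\equiv 3\pmod 4$ below $\sqrt{x}$ lies in $\BB$ (an odd number of large such factors would force $m\equiv 3\pmod 4$). Hence $S=S(\mathscr{A},\sqrt{x})$, the Buchstab correction runs only over the short interval $z<p_1\le\sqrt{x}$, and one sees that every contributing $m$ has the shape $m=ap_1p_2$ with $a\in\BB$, $a<x/z^2$ small, and $p_1\le p_2$ both large primes $\equiv 3\pmod 4$. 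The paper then switches to sum over $a$ and $p_1$, applies a composed linear upper-bound sieve to the pair of conditions ``$p_2$ prime'' and ``$2kap_1p_2+1$ prime'', and obtains a bound of size $\asymp\delta^{3/2}XV(z)$. Choosing $\delta=1/3890$ (so $s=1944/1943$) gives a Buchstab loss $\approx 0.0043$, comfortably below $\tfrac12 f(s)\approx 0.017$.

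In your version the range $q>\sqrt{x}$ is not vacuous (your $m$ are not constrained mod $4$), and your switched estimate there yields only $\ll kx/(\phi(k)(\log x)^{3/2})$ with a \emph{fixed} implied constant independent of $s$. Meanwhile your claim that the range $z\le q\le\sqrt{x}$ contributes $o(\,\cdot\,)$ requires $s\to 1^+$, but then $f(s)\to 0$ and cannot beat that fixed constant. So as written the two halves of your lower-bound argument pull against each other; the parameters cannot be chosen ``suitably'' in the way you suggest without further input. The cleanest fix is exactly the paper's: impose $m\equiv 1\pmod 4$ from the start so that the $q>\sqrt{x}$ range disappears and only the short range (with its $m=ap_1p_2$ structure) remains.
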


\begin{proof} 
Let $k\in\BB$ be fixed.
Note that $(k,2P(x)) = 1$. 
As $\ind{\BB}(m) = 1$ implies that $m \equiv 1 \pod{4}$, and thus
$2km + 1 \equiv 3 \pod{8}$, we can assume that 
our sum is over $m$ for which $2km + 1 \equiv j \pod{8k}$ for 
some reduced residue $j \pod{8k}$, with $j \equiv 3 \pod{8}$ and 
$j \equiv 1 \pod{k}$. 
Thus, we want to sift the sequence
$\mathscr{A} \defeq (\ind{\PP \cap \, j \pod{8k}}(2km+1))$ by the 
primes in $\PP \cap 3 \pod{4}$, and the sum in 
\eqref{eq:sieveup} is equal to $S(\mathscr{A},\sqrt{x})$, where 
\[
 S(\mathscr{A},z)
  \defeq 
   \sums[m \le x][(m,P(z)) = 1] 
    \ind{\PP \cap \, j \pod{8k}}(2km + 1) 
\]
is our sifting function. 

Let $z < x$.
Suppose $d \mid P(z)$ and note that $(d,2k) = 1$ (since 
$2 \nmid P(z)$ and $(k,P(z)) = 1$).
Thus, $d \mid m$ if and only if $2km + 1 \equiv 1 \pod{d}$, and so
\[
 \mathscr{A}_d(x)
  \defeq 
   \sums[m \le x][d \mid m] 
    \ind{\PP \cap \, j \pod{8k}}(2km + 1)
    =
    \pi(2kx + 1;8dk,h)
    =
     g(d)X
      + 
       r_d  
\]
for some reduced residue $h \pod{8dk}$ with $h \equiv j \pod{8k}$ 
and $h \equiv 1 \pod{d}$, and where 
$
 X 
  \defeq 
   \pi(2kx)/\phi(8k)
$,
$ 
  g(d) \defeq 1/\phi(d) 
$ 
and 
\[
 r_d 
  \defeq 
   \mathscr{A}_d(x)
    -
     g(d)X
  =
   \pi(2kx + 1;8dk,h) - \pi(2kx)/\phi(8dk).
\]
Now set $\delta \defeq 1/3890$.
(The argument below works for any sufficiently small $\delta$.)
By Theorem \ref{thm:4.2} ($x \mapsto 2kx$) our sequence 
$\mathscr{A}$ has level of distribution 
$D \defeq \sqrt{x}/x^{\delta}$, and we have 
\[
 R(D,z)
  \defeq 
   \sum_{d \mid P(z), \, d < D}
    |r_d|
     \ll_{\delta}
     X(\log x)^{-2/3}
\]
provided that $\eta = \eta(\delta)$ is sufficiently small, which we 
assume. 
We fix our sifting level $z$ and sifting variable $s$ at 
\[
 z 
  \defeq D/x^{\delta} = \sqrt{x}/x^{2\delta} 
   \quad 
    \text{and}
     \quad 
      s \defeq \log D/\log z 
         = (1 - 2\delta)/(1 - 4\delta) = 1944/1943.
\]
We can infer from \eqref{eq:sievedim} and 
\cite[Theorem 11.12--Theorem 11.13 et seq.]{FI10} that 
\[
  S(\mathscr{A},z)
   \ge 
    XV(z)
    \Br{f(s) + O\big((\log D)^{-1/6}\big)}
    - R(D,z),
\]
and 
\[
 S(\mathscr{A},z)
  \le 
   XV(z)
    \Br{F(s) + O\big((\log D)^{-1/6}\big)}
     + R(D,z).  
\]
As $V(z) \asymp (\log z)^{1/2}$ by \eqref{eq:mert34} and 
$R(D,z) \ll_{\delta} X(\log z)^{2/3}$, the latter can be subsumed 
under the $O$-term in each case, hence 
\begin{equation}
 \label{eq:bndsSAz}
  f(s)
  + O_{\delta}\big((\log x)^{-1/6}\big)
    \le 
     \frac{S(\mathscr{A},z)}
          {XV(z)}
      \le 
       F(s) 
        + O_{\delta}\big((\log x)^{-1/6}\big).
\end{equation}
Since $S(\mathscr{A},\sqrt{x}) \le S(\mathscr{A},z)$, the upper 
bound in \eqref{eq:sieveup} follows.
%
%
We claim that  
\begin{equation}
 \label{eq:claimlow}
  \frac{S(\mathscr{A},z) 
       - S(\mathscr{A},\sqrt{x})}
       {XV(z)}
    \le 
     {\textstyle \frac{1}{2} }
      f(s)
     + 
      O_{\delta}\big((\log x)^{-1/6}\big),
\end{equation}
which, when combined with the first inequality in 
\eqref{eq:bndsSAz}, gives the lower bound in \eqref{eq:sieveup}.

For $\zh < \sqrt{x}$ we have Buchstab's identity 
\cite[(6.4)]{FI10}:
\[
   S(\mathscr{A},z) - S(\mathscr{A},\sqrt{x})
  =
   \sums[\zh < p_1 \le \sqrt{x}][p_1 \equiv 3 \pod{4}]
    \sums[m \le x][p_1 \mid \, m][(m,P(p_1)) = 1]
     \ind{\PP \cap \, j \pod{8k}}(2km + 1)
      \eqdef 
       T.
\]
Suppose $x^{1/3} \le \zh < \sqrt{x}$ and consider any $m$ that 
makes a nonzero contribution to the inner sum in $T$.
We have $p_1 \mid m$ and $m \le p_1^3$ for some 
$p_1 \equiv 3 \pod{4}$, $m$ is not divisible by any prime less 
than $p_1$ in $\PP \cap 3 \pod{4}$, yet recall that 
$m \equiv 1 \pod{4}$ (for $2km + 1 \equiv j \equiv 3 \pod{8}$ and 
$k \equiv 1 \pod{4}$).
Therefore, $p_2 \mid m$ for exactly one prime 
$p_2 \equiv 3 \pod{4}$ in addition to $p_1$.
Since $(k,p_1p_2) = 1$, we conclude that 
$
 m = ap_1p_2 
$
for some $a,p_1,p_2$ such that  
\[
 a \equiv 1 \pod{4},
  \quad 
   p_1 \equiv p_2 \equiv 3 \pod{4},
    \quad 
     \zh < p_1 \le \sqrt{x}
      \quad 
       \text{and}
        \quad 
     p_1 \le p_2 \le x/(ap_1). 
\]
Also, we have $az^2 < ap_1^2 \le ap_1p_2 \le x$;
in particular,
\[
 a < x/\zh^2 \le \zh < p_1, 
  \quad 
   \ind{\BB}(a) = 1 
  \quad 
   \text{and}
    \quad 
     \zh < p_1 \le \sqrt{x/a}.
\]
Hence 
\[
 T
  \le 
   \sum_{a \le x/\zh^2}
    \ind{\BB}(a)
     \sums[\zh < p_1 \le \sqrt{x/a}][p_1 \equiv 3 \pod{4}] 
      \hspace{5pt}
      \sums[n_2 \le x/(ap_1)]
           [n_2 \equiv 3 \pod{4}]
       \ind{\PP}(n_2)
        \cdot 
         \ind{\PP}
        (2k ap_1n_2 + 1).
\]

We let $(\lambda_{d_2})$ and $(\lambda_d)$ be any upper-bound 
sieves of level $\Dh$ and ``of beta type'' (so that 
$\lambda_{d_2}, \lambda_{d} \in \{-1,0,1\}$).
We note that as 
$
 \ind{\PP}(n)
  \le 
   \sum_{\nu \mid n}
    \lambda_{\nu}
$
($\nu = d_2,d$) for every $n$, we have
\[
 \sums[n_2 \le x/(ap_1)]
      [n_2 \equiv 3 \pod{4}]
  \ind{\PP}(n_2)
   \cdot
    \ind{\PP}
   (2ap_1n_2 + 1)
    \le 
     \sum_{d_2,d}
      \lambda_{d_2}\lambda_{d}
       \sums[n_2 \le x/(ap_1)]
            [n_2 \equiv 3 \pod{4},\, n_2 \equiv 0 \pod{d}]            
            [2ap_1n_2 + 1 \equiv 0 \pod{d_2}]
         1.
\]
The three congruences in the last sum hold only if 
$
(d_2,d) = (d_2,2a) = (2,d) = 1,
$
so combining what we have so far, we obtain (for some residue 
$b \pod{4d_2d}$), 
\begin{align*}
  T
 & \le 
    \sums[a \le x/\zh^2]
     \ind{\BB}(a)
      \sums[\zh < p_1 \le \sqrt{x/a}][p_1 \equiv 3 \pod{4}]
       \sums
            [(d_2,d) = 1]
            [(d_2,2a) = 1]
            [(2,d) = 1]
        \lambda_{d_2}\lambda_{d}
    \sums[n_2 \le x/(ap_1)]
         [n_2 \equiv b \pod{4d_2d}]
     1
 \\
 & =
     \sums[a \le x/\zh^2]
      \ind{\BB}(a)
      \sums[\zh < p_1 \le \sqrt{x/a}][p_1 \equiv 3 \pod{4}]
       \sums
            [(d_2,d) = 1]
            [(d_2,2a) = 1]
            [(2,d) = 1]
             \lambda_{d_2}\lambda_{d}
              \Br{\frac{x}{4ap_1 d_2d} + O(1)}.
\end{align*}

The contribution of the $O$-term to the sum is 
$\ll \Dh^2x/\zh \le \Dh^2\zh^2$.
By a general result \cite[Theorem 5.9]{FI10} on the composition of 
linear sieves, 
\[
 \sums
      [(d_2,d) = 1]
      [(d_2,2a) = 1]
      [(2,d) = 1]
       \frac{\lambda_{d_2}\lambda_{d}}{d_2d}
  \le
   \frac{4C + o(1)}{(\log \Dh)^2}
    \frac{2a}{\phi(2a)}
     \br{\frac{2}{\phi(2)}} 
   \le
     \frac{16C + o(1)}{(\log \Dh)^2}
      \frac{k}{\phi(k)}
       \frac{a}{\phi(a)},
\]
where $o(1)$ denotes a quantity tending to zero as $\Dh$ tends to 
infinity and%
\footnote{%
The constant $\prod_{p}\br{1 + (p-1)^{-2}} = 2.826\ldots$ is 
known as Murata's constant.
}
\[
 \textstyle 
 C
  = 
   \prod_{p\,\nmid\,2a}
    \br{1 + (p-1)^{-2}}
     \le 
      \frac{1}{2}
       \prod_p
        \br{1 + (p-1)^{-2}}
         =
          1.413\ldots.
\]
Thus, $16C + o(1) < 24$ if $\Dh$ is sufficiently large, as we now 
assume.
Gathering all of this, then using the fact that 
$
 \sum_{a \le x/\zh^2}
  \ind{\BB}(a)/\phi(a)
   \le 
    W(x/\zh^2)
$
(cf.\ \eqref{eq:def:VW}) and the bound \eqref{eq:mert1p}, we 
obtain that  
\begin{align*}
 T
 & 
  \le 
   \frac{6x}{\phi(k)(\log \Dh)^2}
    \sum_{a \le x/\zh^2}
     \frac{\ind{\BB}(a)}{\phi(a)}
      \sums[\zh < p_1 \le \sqrt{x}][p_1 \equiv 3 \pod{4}]
       \frac{1}{p_1}
        +
         O(\Dh^2\zh^2) 
 \\
 & 
  \le 
    \frac{3xW(x/\zh^2)\log(x/\zh^2)}{2\phi(k)(\log \Dh)^2\log \zh}
     \br{1 + O\br{\frac{1}{\log(x/\zh^2)}}}
   + O(\Dh^2\zh^2).
\end{align*}

We want to exchange the factor $xW(x/\zh^2)/\phi(k)$ for 
$XV(\zh)$, where recall that $X\defeq\pi(2kx + 1)/\phi(8k)$. 
We have 
$
 x/(2\phi(k))
  =
   X(\log x)(1 + O(1/\log x))  
$
by the prime number theorem. 
By \eqref{eq:sievedim} we have 
\[
 W(x/\zh^2)
  =
   W(\zh)
    \bigg(
     \frac{\log(x/\zh^2)}{\log \zh}
    \bigg)^{1/2}
    \bigg(
     1 + O\bigg(\frac{1}{\log(x/\zh^2)}\bigg)
    \bigg),
\]
and by \eqref{eq:WonV} we have, with $A_1$ and $A_3$ being the 
constants defined in \eqref{eq:def:A0},
\[
 W(\zh)
  =
   {\textstyle \frac{1}{2}}
    A_1A_3
     \e^{\gamma}
    V(\zh)
     (\log \zh)
     \bigg(
      1 + O\bigg(\frac{1}{\log \zh}\bigg)
     \bigg).
\]
Gathering once more we obtain 
\[
 T
  \le 
   {\textstyle \frac{3}{2}} A_1A_3\e^{\gamma}
    XV(\zh)
     \frac{(\log x)\big(\log(x/\zh^2)\big)^{3/2}}
          {(\log \Dh)^2(\log \zh)^{1/2}}
     \bigg(
     1 + O\bigg(\frac{1}{\log(x/\zh^2)}\bigg)
     \bigg)
   + 
      O(\Dh^2\zh^2).
\]

We now set $\Dh \defeq \sqrt{\zh}/x^{\delta}$.
We have 
\[
 x/\zh^2 < \zh, 
 \quad 
  \log \zh
   \asymp
    \log \Dh
     \asymp \log x, 
      \quad 
       \log(x/\zh^2)
        \asymp
         \delta\log x,
          \quad 
           \Dh^2\zh^2 = x^{1 - 2\delta}.
\]
It is therefore apparent that $T \ll \delta^{3/2}XV(\zh)$.
To be more precise, 
\[
 \frac{(\log x)\big(\log(x/\zh^2)\big)^{3/2}}
      {(\log \Dh)^2(\log \zh)^{1/2}}
  =
    (4\delta)^{3/2}(1/4 - 2\delta)^{-2}(1/2 - 2\delta)^{1/2}
  < 
   240\delta^{3/2}.
\]
Finally, it is clear that    
$
 A_1A_3
  \le 
   \prod_{p}
    \big(1 + 1/(p(p - 2))\big)
$
(see the definition \eqref{eq:def:A0} of $A_1$ and $A_3$), and it 
is straightforward to verify that this product is less than 
$\prod_{p}(1 - p^{-2})^{-1} = \pi^2/6$. 
Hence 
\[
 T
  \le 
   60\pi^2\e^{\gamma}
    \delta^{3/2} 
     XV(\zh)
      \Br{
       1 + O\big(1/(\delta\log x)\big)
         }.
\]
A calculation shows that 
$60\pi^2\e^{\gamma}\delta^{3/2} = 0.0043\ldots$ (recall that 
$\delta = 1/3890$), and that by \eqref{eq:fs},
$f(s) = 0.0341\ldots$.
Hence \eqref{eq:claimlow}.
\end{proof}


\end{document}